\documentclass[12pt]{amsart}
\usepackage{amsmath, amssymb, amsthm, bm, mathtools, enumitem, caption}
\usepackage{hyperref}
\usepackage{graphicx, tikz}
\usepackage{float}
\usepackage[margin=1in]{geometry}
\usepackage{xcolor}
\theoremstyle{plain}
\newtheorem{theorem}{Theorem}[section]
\newtheorem{proposition}[theorem]{Proposition}
\newtheorem{lemma}[theorem]{Lemma}

\newtheorem*{conjecture}{Conjecture}
\newtheorem*{question}{Question}
\newtheorem*{examples}{Examples}
\newtheorem*{prediction}{Stochastic Prediction}
\newtheorem*{problem}{Main Problem}

\theoremstyle{definition}
\newtheorem*{definition}{Definition}

\theoremstyle{remark}
\newtheorem*{remark}{Remark}

\newcommand{\Ddigits}{\mathcal{D}}
\newcommand{\1}{\mathbf{1}}
\newcommand{\abs}[1]{\left|#1\right|}

\usepackage{fancyhdr}

\title{Dead ends in square-free digit walks}
\newif\ifmanyauthors
\manyauthorstrue 

\ifmanyauthors
  \usepackage[foot]{amsaddr}
  \usepackage{textcomp}
  \newcommand{\dmd}{\ensuremath{\diamond}}

  \author{Evan Chen\textsuperscript{\dag}}
  \email{evan@axiommath.ai}

  \author{Chris Cummins\textsuperscript{*}}
  \email{chris@axiommath.ai}

  \author{Ben Eltschig\textsuperscript{*}}
  \email{ben@axiommath.ai}

  \author{Dejan Grubisic\textsuperscript{*}}
  \email{dejan@axiommath.ai}

  \author{Leopold Haller\textsuperscript{*}}
  \email{leo@axiommath.ai}

  \author{Letong Hong\textsuperscript{\dmd}}
  \email{carina@axiommath.ai}

  \author{Andranik Kurghinyan\textsuperscript{*}}
  \email{andranik@axiommath.ai}

  \author{Kenny Lau\textsuperscript{\dag*}}
  \email{kenny@axiommath.ai}

  \author{Hugh Leather\textsuperscript{*}}
  \email{hugh@axiommath.ai}

  \author{Seewoo Lee\textsuperscript{\dag}}
  \email{seewoo@axiommath.ai}

  \author{Aram Markosyan\textsuperscript{*}}
  \email{am@axiommath.ai}

  \author{Ken Ono\textsuperscript{\dag}}
  \email{ken@axiommath.ai}

  \author{Manooshree Patel\textsuperscript{*}}
  \email{manooshree@axiommath.ai}

  \author{Gaurang Pendharkar\textsuperscript{*}}
  \email{gaurang@axiommath.ai}

  \author{Vedant Rathi\textsuperscript{*}}
  \email{vedant@axiommath.ai}

  \author{Alex Schneidman\textsuperscript{*}}
  \email{alex@axiommath.ai}

  \author{Volker Seeker\textsuperscript{*}}
  \email{volker@axiommath.ai}

  \author{Shubho Sengupta\textsuperscript{\dmd}}
  \email{shubho@axiommath.ai}

  \author{Ishan Sinha\textsuperscript{*}}
  \email{ishan@axiommath.ai}

  \author{Jimmy Xin\textsuperscript{*}}
  \email{jimmy@axiommath.ai}

  \author{Jujian Zhang\textsuperscript{\dag*}}
  \email{jujian@axiommath.ai}

\else

  \author{Evan Chen}
  \email{evan@axiommath.ai}

  \author{Kenny Lau}
  \email{kenny@axiommath.ai}

  \author{Seewoo Lee}
  \email{seewoo@axiommath.ai}

  \author{Ken Ono}
  \email{ken@axiommath.ai}

  \author{Jujian Zhang}
  \email{jujian@axiommath.ai}
\fi

\subjclass[2010]{11A63, 11B83, 11N05}
\keywords{stochastic process, integer sequences}
\address{Axiom Math, 124 University Avenue, Palo Alto, CA 94301}

\usepackage[obeyFinal,textsize=tiny,shadow,loadshadowlibrary]{todonotes}
\usepackage{soul}
\newcommand{\added}[1]{\ifmmode{\color{red}#1}\else{\color{red}\ul{#1}}\fi}

\begin{document}
\maketitle

\ifmanyauthors
\begin{center}
  \footnotesize
  Authors are listed alphabetically. \\
  \textsuperscript{\dag}Mathematical contributor,
  \textsuperscript{*}Engineering contributor,
  \textsuperscript{\dmd}Principal investigator.
\end{center}
\fi

\begin{abstract}
We study ``dead ends'' in square-free digit walks: square-free integers $N$ such that, in base $b$,
every one-digit extension $bN+d$ is non-square-free.
In base $10$, the stochastic independence model of ~\cite{MillerEtAl} suggests
that infinite square-free walks occur with probability near $1,$ corresponding
to an asymptotic dead-end density of $\approx 5.218\times 10^{-5}$.
We prove that the true asymptotic dead-end density satisfies
\[ c_{\mathrm{dead}} \approx 1.317\times 10^{-9}, \]
roughly a factor of $\sim\! 4\times 10^4$ smaller than the prediction.
For every base $b\geq 2$, we prove that dead-end densities exist and are given
by a closed-form expression (as a finite alternating sum of Euler products).
The argument is fully formalized in Lean/Mathlib, and was produced automatically by AxiomProver
from a natural-language statement of the problem.
\end{abstract}

\section*{Update}
After we posted the first version of this manuscript on the arXiv,
we learned from Kannan Soundararajan that the result in this paper was
previously obtained by Mirsky in 1947 \cite{MIRSKY}.
We mistakenly assumed that the 2024 paper of Miller et al.~\cite{MillerEtAl}
accurately represented the status of this question.
Their paper makes no reference to Mirsky \cite{MIRSKY};
that work seems to have been forgotten.
Although our Lean formalization still stands,
this manuscript will not be submitted for journal publication.

\section{Introduction}
For a positive integer $N$ and a digit $d\in\{0,1,\dots,9\}$, define the
\emph{right-append map}
\[
T_d(N) := 10N+d.
\]
A positive integer is \emph{square-free} if it is not divisible by any perfect square $>1$.
We study \textit{square-free walks}.
Starting with a square-free integer $N_0$,
we attempt to form a sequence $N_0, N_1, N_2, \dots$ by successively appending
digits such that every term in the sequence remains square-free.
For example, starting with $N=5$, we have
\begin{displaymath}
\begin{split}
      & T_6(5) = 56 = 2^3\cdot 7 \quad \xrightarrow{\text{Fail}} \qquad \ \ \ \
      (\text{Divisible by the square $2^2$}),\\
      & T_1(5) = 51 =3\cdot 17  \quad \xrightarrow{\text{Success}} \qquad (\text{Square-free}).
\end{split}
\end{displaymath}
So although $T_6(5)$ fails, we can continue with $51$ and we find that
\begin{displaymath}
\begin{split}
  &T_3(51) = 513 = 3^3\cdot 19 \quad \xrightarrow{\text{Fail}} \qquad \ \ \ \
  (\text{Divisible by the square $3^2$}),\\
  &T_9(51) = 519 =3\cdot 173  \quad \xrightarrow{\text{Success}} \qquad (\text{Square-free}).
\end{split}
\end{displaymath}
Starting with any $N_0$, this process defines a tree,
where branches terminate when appending a digit results in a non-square-free integer.
This process inspires a natural question.

\begin{question}
Is it possible to continue such a walk indefinitely?
In other words, can we walk to infinity along the square-free integers?
Are there trees with infinitely long branches?
\end{question}

This question is raised in \cite{MillerEtAl} by Miller et al., and the following conjecture is posed.

\begin{conjecture}[Miller et al.\ \cite{MillerEtAl}]
There exists an infinite sequence of square-free integers
\[ \{N_0,N_1,N_2, N_3, \dots\} \]
and digits
$d_{k+1}\in\{0,1,\dots,9\}$ such that $N_{k+1}=10N_k+d_{k+1}$ for all $k\ge 0$.
\end{conjecture}

\begin{remark}
The paper by Miller et al. \cite{MillerEtAl} considers further walks. A recent paper by Kominers \cite{Kominers} proves that similar walks are impossible in the setting of Fibonacci numbers and Lucas numbers.
\end{remark}

Despite its innocent appearance, this conjecture remains open.
In particular, no one has exhibited an explicit infinite sequence of digits that
provably generates square-free integers at every step, nor has the existence of such a walk been established.

Miller et al.\ \cite{MillerEtAl} analyze this conjecture using a stochastic
framework called the ``Blind Unlimited Model,'' where square-freeness is treated
as a random event with independent probability $6/\pi^2$,
the asymptotic density of square-free integers among the positive integers.
They model the 10 potential digit extensions as a branching process,
and they calculate that the probability of the entire tree going extinct is only
$\ell \approx 8.6 \times 10^{-5}$.
This implies that an infinite walk from a given $N_0$ exists with probability $\approx 0.99991$.
To make this precise, we make the following definition.

\begin{definition}
A positive integer $N$ is a \emph{dead end} if
\begin{enumerate}
    \item $N$ is square-free, and
    \item for every digit $d \in \{0, 1, \dots, 9\}$, the integer $10N+d$ is \emph{not} square-free.
\end{enumerate}
\end{definition}

\noindent
The stochastic heuristic predicts the asymptotic density of dead ends in terms of
$\rho:=\frac{6}{\pi^2}$ and
the counting function
\begin{equation}
D(X) := \# \left\{ 1\leq N \leq X \ : \ N \text{ is a dead end} \right\}.
\end{equation}

\begin{prediction}[Miller et al.\ \cite{MillerEtAl}]
If we define $P_k$ to be the probability that the rooted digit-walk tree generated
from a \emph{square-free} root has \emph{finite height at most $k$},
then we have:
\begin{enumerate}
\item $P_1$ is the probability that the root has no square-free children (no digit can be added),
so
\[
P_1=(1-\rho)^{10}.
\]
\item More generally, the tree has height $\le k+1$ exactly when, for each digit $d$,
either $10N+d$ is not square-free (probability $1-\rho$), or else $10N+d$ is square-free (probability $\rho$)
\emph{and} the subtree rooted at $10N+d$ has height $\le k$ (probability $P_k$).
By the independence heuristic across digits, this yields the recursion
\[
P_{k+1}=\bigl( (1-\rho)+\rho P_k \bigr)^{10}\qquad (k\ge 1).
\]
\end{enumerate}
The ``extinction probability'' (i.e.\ the probability that the entire tree is finite, equivalently that
there is no infinite square-free walk) is predicted to be the limit
\[
\ell:=\lim_{k\to\infty} P_k\approx 8.59502\times 10^{-5}.
\]
Taking into account the possibility that $N_0$ might not be square-free,
the stochastic prediction for the asymptotic density for dead ends is
\[
\lim_{X\to +\infty} \frac{D(X)}{X} \approx \frac{6}{\pi^2} \cdot P_1 \approx 5.21818\times 10^{-5}.
\]
\end{prediction}

Although dead ends are expected to be very rare, occurring with near zero density,
it is possible to show explicit examples.
Indeed,  Miller et al.\ offer the example $N=231546210170694222$, where we find that
\begin{align*}
    231546210170694222{\bf 0} &= 2^2\cdot 578865525426735555,\\
    231546210170694222{\bf 1} &= 11^2\cdot 19136050427330101,\\
    231546210170694222{\bf 2} &= 19^2\cdot 6414022442401502,\\
    231546210170694222{\bf 3} &= 7^2\cdot 47254328606264127,\\
    231546210170694222{\bf 4} &= 2^4\cdot 144716381356683889,\\
    231546210170694222{\bf 5} &= 5^2\cdot 92618484068277689,\\
    231546210170694222{\bf 6} &= 3^3\cdot 85757855618775638,\\
    231546210170694222{\bf 7} &= 13^2\cdot 13700959181697883,\\
    231546210170694222{\bf 8} &= 2^2\cdot 578865525426735557,\\
    231546210170694222{\bf 9} &= 17^2\cdot 8011979590681461.
\end{align*}
This motivates the following concrete, \emph{non-stochastic} question about actual integers.
Exactly how rare are dead ends?
Is the stochastic prediction correct?
Despite the convincing evidence that such stochastic models are accurate for many questions,
here we prove that stochastic reasoning gives the wrong answer when applied to dead ends.
Dead ends are orders of magnitude rarer than the stochastic prediction.

For convenience, we define $\Ddigits:=\{0, 1, 2,\dots, 9\}$ to be the set of base $10$ digits.

\begin{theorem}[Asymptotic density of dead ends]\label{thm:main}
For all $X\ge 3$, we have
\[
D(X)=c_{\mathrm{dead}}\,X + O\!\left(\frac{X}{\sqrt{\log X}}\right),
\]
where the constant is
\[
c_{\mathrm{dead}}
:=\sum_{S\subseteq \Ddigits} (-1)^{\abs{S}}
\prod_{p \ {\text prime}}\left(1-\frac{\nu_p(S)}{p^2}\right),
\]
and where for each subset $S\subseteq\Ddigits$ we define
\[
\nu_p(S):=\#\Bigl\{n\bmod p^2:\ p^2\mid n\ \text{or}\ p^2\mid(10n+d)\ \text{for some }d\in S\Bigr\}.
\]
Moreover, have that
$c_{\mathrm{dead}}  \approx 1.3170\times 10^{-9}.$
\end{theorem}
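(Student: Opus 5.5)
We prove the asymptotic by inclusion--exclusion over the set of digits whose extensions are square-free, reducing everything to counting integers $N\le X$ that avoid finitely many residue conditions modulo squares of primes. Write $\mu^2$ for the indicator of square-freeness. Then
\[
\1[N \text{ dead end}] = \mu^2(N)\prod_{d\in\Ddigits}\bigl(1-\mu^2(10N+d)\bigr)=\sum_{S\subseteq\Ddigits}(-1)^{\abs{S}}\,\mu^2(N)\prod_{d\in S}\mu^2(10N+d).
\]
So $D(X)=\sum_S(-1)^{\abs S}A_S(X)$, where $A_S(X)$ counts $N\le X$ such that $N$ and all $10N+d$ with $d\in S$ are square-free. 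It therefore suffices to show $A_S(X)=X\prod_p(1-\nu_p(S)/p^2)+O(X/\sqrt{\log X})$ for each of the $2^{10}$ sets $S$.

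\textbf{Local conditions.} For fixed $S$, the integer $N$ is counted exactly when, for every prime $p$, the residue $N\bmod p^2$ lies outside the bad set $B_p(S)$ of size $\nu_p(S)$. For $p\ne 2,5$, the condition $p^2\mid 10n+d$ determines a single class modulo $p^2$, so $\nu_p(S)\le 11<p^2$ once $p\ge 5$, and the Euler product converges absolutely because $\nu_p(S)=O(1)$. For $p=2,5$, the condition $p^2\mid 10n+d$ is either never satisfied or holds for a class modulo $p$. We must check that $\nu_p(S)<p^2$ still holds so that the local factors are positive; a vanishing factor would merely give a zero main term and causes no problem.

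\textbf{Sieve.} Fix a parameter $z$ and split the primes. For $p\le z$, the conditions are periodic modulo $Q=\prod_{p\le z}p^2$. By CRT, the number of $N\le X$ avoiding all $B_p$ with $p\le z$ equals $X\prod_{p\le z}(1-\nu_p/p^2)+O(Q)$, or, by Möbius inclusion--exclusion over squarefree $m\mid\prod_{p\le z}p$, up to an error of $O\bigl(\prod_{p\le z}(1+\nu_p)\bigr)$. Replacing $\prod_{p\le z}$ by the full product costs $O(X\sum_{p>z}p^{-2})=O(X/z)$.

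\textbf{Large primes (the main obstacle).} We must bound the number of $N\le X$ for which some $p>z$ has $p^2\mid N$ or $p^2\mid 10N+d$. For $z<p\le \sqrt{10X+9}$, there are $O(X/p^2+1)$ such $N$ per condition. Summing gives $O(X/z+\sqrt X)$; the $\pi(\sqrt X)$ term is harmless because we take all $p\le\sqrt{10X+9}$ with the $+1$ counted. Choosing $z$ with $\prod_{p\le z}(1+\nu_p)\le 12^{\pi(z)}\le X^{1/2}$, i.e.\ $z\asymp \log X$, makes all errors $O(X/\log X)$. This is stronger than the stated $O(X/\sqrt{\log X})$, so a cruder choice of $z$ also suffices, and we can afford to be generous; presumably the stated form comes from the formalization. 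The delicate point is making the CRT/Möbius error uniform: we handle it with the Möbius expansion, where each term $\lfloor X/\text{modulus}\rfloor\cdot(\text{count of residues})$ has error bounded by the number of bad residues.

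\textbf{Numerics.} Finally, $c_{\mathrm{dead}}\approx1.3170\times10^{-9}$ is computed by evaluating each of the $1024$ Euler products. We compute the factors exactly for $p$ up to a large bound and bound the tail via $\nu_p(S)=\abs S+1$ for $p\ne2,5$ (the classes are distinct for $p>$ small bound since $d\equiv d' \pmod{p^2}$ fails). The tail product is then approximated using $\prod_{p>P}(1-k/p^2)=1+O(k/P)$. The heavy cancellation in the alternating sum means the precision must be tracked carefully; this is the practical difficulty, though not a conceptual one.
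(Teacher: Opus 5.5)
Your argument for the asymptotic formula is correct and is essentially the paper's. It uses the same inclusion--exclusion over $S\subseteq\Ddigits$ reducing to $Q_S(X)$, a CRT sieve by $p^2$ for $p\le z$, a union bound with $X/p^2+O(1)$ solutions per linear form for $z<p\le\sqrt{10X+9}$, and an $O(1/z)$ tail for the Euler product. Your Legendre-type expansion over $m\mid\prod_{p\le z}p$ has error $\prod_{p\le z}(1+\nu_p(S))\le 12^{\pi(z)}$, and with $z\asymp\log X$ it does give the stronger error $O(X/\log X)$. The paper's CRT count, however, has error $M(z)=P(z)^2\le 16^{z}$ by Chebyshev, so it would allow $z\asymp\log X$ just as well. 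The paper's weaker $X/\sqrt{\log X}$ comes only from its choice $z\approx\sqrt{\log X}$ and the crude bound $P(z)\le z^z$, not from the method.

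The numerical part contains a slip and, as written, would not certify $c_{\mathrm{dead}}\approx1.3170\times10^{-9}$.

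\begin{itemize}
\item \textbf{Wrong count for $p\ge7$.} You take $\nu_p(S)=\abs{S}+1$. But $p\nmid 10$, so $p^2\mid 10n$ iff $p^2\mid n$, and the class for $d=0$ coincides with the class from $p^2\mid N$. Hence $\nu_p(S)=1+\abs{S}-\1_{0\in S}$. Used as an upper bound in a tail estimate this is harmless, but used as the value of the local factor it gives the wrong constant.
\item \textbf{Tail bound too weak.} The bound $\prod_{p>P}(1-k/p^2)=1+O(k/P)$ cannot reach the needed accuracy. The $C(S)$ are nonnegative with total well above $1$ (the terms with $\abs{S}\le1$ alone exceed $4$), while the alternating sum is about $10^{-9}$. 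So the products must be known to absolute accuracy of roughly $10^{-14}$. With your bound that forces $P$ of order $10^{15}$ in a prime-by-prime computation.
\end{itemize}

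The missing simplification, used in the paper's appendix, is that for $p\ge7$ the local factor depends on $S$ only through $a=\nu_p(S)$. Hence $c_{\mathrm{dead}}=\sum_a r_a\prod_{p\ge7}(1-a/p^2)$, where the rational coefficients $r_a$ are computed exactly from the explicit residue counts at $p=2,3,5$. Each of these few products can then be evaluated to any desired precision from $\log(1-a/p^2)=-\sum_{k\ge1}a^k/(kp^{2k})$ and the prime sums $\sum_{p\ge7}p^{-2k}$, which decay geometrically in $k$.
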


\begin{remark}
The main result of this paper shows that the stochastic
prediction is far from the truth for dead ends.
The reason is arithmetic. For each prime $p\ge 7$,
the condition $p^2\mid (10n+d)$ forces $n$ into a \emph{single}
residue class modulo $p^2$, so one prime square can ``explain'' at most one digit obstruction at a time.
For all ten digits to fail square-freeness simultaneously, many distinct prime squares must be involved,
creating strong dependence that the naive stochastic model misses.
\end{remark}

While Theorem~\ref{thm:main} focuses on base $10$, the phenomenon of unavoidable dead ends is universal.
The proof of Theorem~\ref{thm:main} generalizes easily {\it mutatis mutandis} to
give a general theorem for all bases $b\geq 2$.
Specifically, we can restate the problem for general $b \ge 2$ as follows.
\begin{problem}
  Fix an integer base $b\geq 2$, and let
  \[
  \mathcal{D}_b := \{0,1,2,\dots,b-1\}.
  \]
  A positive integer $N$ is a \emph{base $b$ dead end} if $N$ is square-free and, for every digit
  $d\in \mathcal{D}_b$, the integer $bN+d$ is not square-free. Define the counting function
  \[
  D_b(X) \;:=\; \#\{1\le N\le X : N \text{ is a base-$b$ dead end}\}.
  \]
  Determine, with proof, the constant $c_{\mathrm{dead}}(b)$ such that
  \[ D_b(X) \;=\; c_{\mathrm{dead}}(b)\,X  + o_b(X). \]
\end{problem}

Namely, we have following general theorem.

\begin{theorem}[Asymptotic density of dead ends in base $b$]\label{thm:base_b}
For all $X\ge 3$, we have
\[
D_b(X) \;=\; c_{\mathrm{dead}}(b)\,X \;+\; O_b\!\left(\frac{X}{\sqrt{\log X}}\right),
\]
where $c_{\mathrm{dead}}(b)$ is the well-defined positive constant
\[
c_{\mathrm{dead}}(b)
\;:=\;
\sum_{S\subseteq \mathcal{D}_b} (-1)^{|S|}
\prod_{p\ \mathrm{prime}}
\left(1-\frac{\nu_{p,b}(S)}{p^2}\right),
\]
and for each prime $p$ and each subset $S\subseteq \mathcal{D}_b$ we define
\[
\nu_{p,b}(S)
\;:=\;
\#\Bigl\{ n \bmod p^2 :\ p^2\mid n\ \text{or}\ p^2\mid (bn+d)\ \text{for some } d\in S \Bigr\}.
\]
\end{theorem}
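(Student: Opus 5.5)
Our plan is to reduce the dead-end condition to finitely many square-free counting problems via inclusion--exclusion over digits, and then evaluate each with a sieve. For $N$ square-free, "every $bN+d$ is non-square-free" means
\[
\1_{\text{dead}}(N)=\mu^2(N)\prod_{d\in\mathcal{D}_b}\bigl(1-\mu^2(bN+d)\bigr)
=\sum_{S\subseteq\mathcal{D}_b}(-1)^{|S|}\,\mu^2(N)\prod_{d\in S}\mu^2(bN+d).
\]
So $D_b(X)=\sum_S(-1)^{|S|}A_S(X)$, where $A_S(X)$ counts $N\le X$ with $N$ and all $bN+d$ ($d\in S$) square-free. It therefore suffices to show
\[
A_S(X)=X\prod_p\Bigl(1-\frac{\nu_{p,b}(S)}{p^2}\Bigr)+O_b\Bigl(\frac{X}{\sqrt{\log X}}\Bigr)
\]
for each of the $2^b$ subsets $S$.

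To estimate $A_S(X)$, fix a truncation parameter $z=z(X)$ and use the elementary sieve. Since $b$ is fixed, the polynomial $F_S(n)=n\prod_{d\in S}(bn+d)$ has values at most $(bX+b)^{b+1}$.
\begin{enumerate}
\item Let $A_S^{(z)}(X)$ count $N\le X$ that avoid the bad residue classes modulo $p^2$ for every prime $p\le z$. The bad set modulo $p^2$ has exactly $\nu_{p,b}(S)$ elements by definition.
\item By CRT, $A_S^{(z)}(X)$ counts a union of residue classes modulo $P=\prod_{p\le z}p^2$. Hence
\[
A_S^{(z)}(X)=X\prod_{p\le z}\Bigl(1-\frac{\nu_{p,b}(S)}{p^2}\Bigr)+O(P).
\]
With $z=c\log X$ and $c$ small, $P\le e^{(2c+o(1))z}$, which is at most $X^{1/2}$.
\end{enumerate}

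Next we bound $A_S^{(z)}(X)-A_S(X)$, the number of $N\le X$ such that $p^2$ divides $N$ or some $bN+d$ for a prime $p>z$. This is the main obstacle.
\begin{itemize}
\item For $p\nmid b$, each congruence $p^2\mid bN+d$ (or $p^2\mid N$) fixes a single residue modulo $p^2$. For $z<p\le \sqrt{bX+b}$ this gives at most $(b+1)(X/p^2+1)$ values of $N$.
\item Summing over those primes yields $O_b\bigl(X/z+\sqrt{X}/\log X\bigr)=O_b(X/\log X)$, since $z\asymp\log X$.
\item For $p\mid b$ with $p>z$ there is nothing to bound, because such primes are absent once $z>b$.
\item Primes with $p^2>bX+b$ cannot divide any of the values, so they contribute nothing.
\end{itemize}
So the large-prime tail is straightforward here, because we need only one $p^2$ up to $\sqrt{X}$ and no square-sieve difficulty arises. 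This yields an error of $O(X/\log X)$, stronger than the claimed $X/\sqrt{\log X}$. If the crude choice of $z$ needs adjusting, I would take $z=\tfrac14\log X$, which still gives $P\le X^{1/2+o(1)}$.

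Finally, we complete the Euler product. For $p\nmid b$, $\nu_{p,b}(S)\le |S|+1\le b+1$, so
\[
\prod_{p>z}\Bigl(1-\frac{\nu}{p^2}\Bigr)=1+O_b(1/z).
\]
The product converges absolutely. The only factors that could vanish are those with $\nu_{p,b}(S)=p^2$; in that case both sides are $0$, and the statement remains consistent. Combining the pieces gives each $A_S(X)$, and summing over $S$ gives $D_b(X)=c_{\mathrm{dead}}(b)X+O_b(X/\log X)$.

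For positivity of $c_{\mathrm{dead}}(b)$, we would exhibit, via CRT, a residue class $N\equiv r \pmod{M}$ that forces each $bN+d$ to be divisible by a chosen prime square (distinct primes $p_d\nmid b$). We then apply the same sieve, conditioned on this class, to show that a positive proportion of such $N$ are square-free. Theorem~\ref{thm:main} is the case $b=10$, and the numerical value of $c_{\mathrm{dead}}$ would be a separate computation.
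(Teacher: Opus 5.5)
Your asymptotic argument is the paper's: the same digit inclusion--exclusion, the CRT count of allowed classes modulo $\prod_{p\le z}p^2$ with error $O(P)$, and the union bound over primes $z<p\le\sqrt{bX+b}$ using that $b$ is invertible modulo $p^2$ once $z>b$. The Euler tail $1+O_b(1/z)$ is also the same. The one real difference is the truncation. The paper bounds $\prod_{p\le z}p\le z^z$ and so must take $z\asymp\sqrt{\log X}$. You use a Chebyshev-type bound $\prod_{p\le z}p\le e^{O(z)}$, which allows $z\asymp\log X$ and gives the sharper error $O_b(X/\log X)$. (Your display should read $P=e^{(2+o(1))z}=X^{2c+o(1)}$, not $e^{(2c+o(1))z}$. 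With only $\prod_{p\le z}p<4^z$ you get $P<X^{c\log 16}$, which is still fine for small $c$.)

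The gap is in the positivity step, which the paper's proof does not address for general $b$; it only asserts positivity, backed by numerics for $b\in\{2,3,5,7,10\}$. You force $p_d^2\mid bN+d$ with $p_d\nmid b$ for \emph{every} digit, including $d=0$. But $p_0\nmid b$ and $p_0^2\mid bN$ imply $p_0^2\mid N$. So your residue class contains no square-free $N$, and the conditioned sieve gives density $0$, not a positive proportion.

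The digit $0$ must be treated separately. If $b$ is not square-free, $bN$ is never square-free and no condition is needed. If $b$ is square-free, choose a prime $q\mid b$ and impose $N\equiv q\pmod{q^2}$, so that $q^2\mid bN$ while $q^2\nmid N$. For $1\le d\le b-1$, take distinct primes $p_d>b$, so the forced class $N\equiv -d\,b^{-1}\pmod{p_d^2}$ is nonzero. The resulting class $r\bmod M$, with $M$ a product of prime squares, then has $p^2\nmid r$ for every $p\mid M$. Its square-free members have density $M^{-1}\prod_{p\nmid M}(1-p^{-2})>0$, and each of them is a dead end. This gives $c_{\mathrm{dead}}(b)>0$.
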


\begin{examples} Using the formulas in Theorem~\ref{thm:base_b}, we computed approximations for the dead-end densities for the prime bases $b\in \{2, 3, 5, 7\}.$

\begin{table}[H]
\centering
\renewcommand{\arraystretch}{1.2}
\setlength{\tabcolsep}{6pt}
\begin{tabular}{|c|c|c|c|c|c|c|c|c|c|}
\hline
$b$ & $2$ & $3$  & $5$  & $7$  \\
\hline
$c_{\mathrm{dead}}(b)$
& $4.13253\times 10^{-2}$
& $9.44842\times 10^{-3}$
& $8.16352\times 10^{-5}$
& $3.08003\times 10^{-6}$\\
\hline
\end{tabular}
\caption{Dead-end density constant $c_{\mathrm{dead}}(b)$ for primes $b\in \{2, 3, 5, 7\}$.}
\label{tab:cdead-b}
\end{table}
\end{examples}

\begin{remark}
This work is a case study and test case for AxiomProver, an AI tool currently under development,
aimed at end-to-end automated theorem proving in mainstream mathematics.
We provided a natural-language formulation of the Main Problem.
AxiomProver correctly discovered the formula for $c_{\mathrm{dead}}(b)$ without any further input,
then generated a Lean/Mathlib statement and a fully verified proof.
Using that formal development as a reference point,
we prepared the exposition in the main text for a mathematical audience, aiming to supply context,
motivation, and a streamlined derivation that can be read independently of the Lean code.
\end{remark}

This paper is organized as follows.
In Section~\ref{sec:proofs} we recall a few facts from elementary number theory
that we then employ to prove Theorem~\ref{thm:main}.
We also give a sketch of the proof of Theorem~\ref{thm:base_b}.
In Section~\ref{sec:AxiomProver} we document the formalization,
clarifies the experimental conditions under which the system was evaluated,
and provides links to the relevant files for interested readers
(Mathematicians not interested in automated theorem proving
can thus safely ignore Section~\ref{sec:AxiomProver}.) Finally,
in the Appendix we give formulas that one can use to compute $c_{\mathrm{dead}}$ to arbitrary precision.

\section*{Acknowledgements}
The authors thank Steven J.\ Miller and Scott Kominers for their comments on an earlier version of this paper. We thank Kannan Soundararajan
for informing us that the result in this paper was previously obtained
by Mirsky in 1947 \cite{MIRSKY} after the first version of this manuscript
was posted to the arXiv.
\ifmanyauthors
\else
This paper describes a test case for AxiomProver,
an autonomous system that is currently under development.
The project engineering team is
Chris Cummins,
Ben Eltschig,
GSM,
Dejan Grubisic,
Leopold Haller,
Letong Hong (principal investigator),
Andranik Kurghinyan,
Kenny Lau,
Hugh Leather,
Aram Markosyan,
Manooshree Patel,
Gaurang Pendharkar,
Vedant Rathi,
Alex Schneidman,
Volker Seeker,
Shubho Sengupta (principal investigator),
Ishan Sinha,
Jimmy Xin,
and Jujian Zhang.
\fi

\section{Proof of Theorems~\ref{thm:main} and \ref{thm:base_b}}\label{sec:proofs}
This section collects the precise tools we will use.
Everything here is standard elementary multiplicative number theory (for example,  see \cite{Apostol}).
We require  the M\"obius function $\mu:\mathbb{Z}_{\ge 1}\to\{-1,0,1\}$
which is defined by $\mu(1)=1$ and for $n\geq 2$ by
\[
\mu(n)=
\begin{cases}
0, & \text{if $p^2\mid n$ for some prime $p$,}\\[4pt]
(-1)^k, & \text{if $n=p_1p_2\cdots p_k$ is a product of $k$ distinct primes.}
\end{cases}
\]
This function has many properties, and here we make use of the simple fact that
for every integer $n\ge 1$, we have
\begin{equation}\label{sqrfree}
\mu(n)^2
=\begin{cases} 1 \ \ \ \ &{\text { if $n$ is square-free}},\\ 0 \ \ \ \ &{\text { otherwise}} \end{cases}.
\end{equation}
In other words, the square of the M\"obius function is the square-free indicator function.

We will repeatedly use the elementary counting fact that for fixed integers $M\ge 1$ and $a$,
the number of integers $1\le n\le X$ with $n\equiv a\pmod M$ equals $\frac{X}{M}+O(1)$,
with an absolute implied constant.
We shall also make critical use of this form of the Chinese Remainder Theorem.

\begin{lemma}[Chinese remainder theorem (CRT)]\label{lem:crt}
Let $M_1,\dots,M_k$ be pairwise coprime moduli.
For each $i$, let $A_i$ be a set of residue classes modulo $M_i$.
Then the number of residue classes modulo $M=M_1\cdots M_k$ whose reduction
modulo $M_i$ lies in $A_i$ for all $i$
is exactly $\prod_{i=1}^k |A_i|$.
\end{lemma}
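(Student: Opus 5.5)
The plan is to prove the CRT lemma by reducing it to the standard ring isomorphism
\[
\phi:\ \mathbb{Z}/M\mathbb{Z}\;\longrightarrow\;\prod_{i=1}^k \mathbb{Z}/M_i\mathbb{Z},\qquad n \bmod M\ \longmapsto\ (n\bmod M_1,\dots,n\bmod M_k),
\]
and then counting preimages of a product set.

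First, $\phi$ is well defined, because each $M_i$ divides $M$.

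Second, $\phi$ is injective. Suppose $\phi(n)=\phi(n')$. Then $M_i\mid n-n'$ for every $i$. Since the $M_i$ are pairwise coprime, their least common multiple equals their product $M$. This follows by induction on $k$, using that $\gcd(M_1\cdots M_{j},M_{j+1})=1$, which holds because a prime dividing both sides would divide some $M_i$ and $M_{j+1}$. Hence $M\mid n-n'$.

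Third, $\phi$ is surjective. Both sides are finite sets of cardinality $M=\prod_i M_i$, so an injective map between them is a bijection.

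With $\phi$ a bijection, the residue classes modulo $M$ whose reductions modulo $M_i$ lie in $A_i$ for all $i$ form exactly the preimage $\phi^{-1}(A_1\times\cdots\times A_k)$. A bijection preserves cardinality, so this set has size $|A_1\times\cdots\times A_k|=\prod_{i=1}^k|A_i|$. The edge case $k=0$ (with $M=1$ and the empty product equal to $1$) is trivially consistent.

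No step is a real obstacle. The only point needing care is the lcm fact, namely that pairwise coprimality (rather than mere coprimality of the whole family) gives $\operatorname{lcm}(M_1,\dots,M_k)=M$. I would handle this by the induction described above. If surjectivity were preferred constructively, one could instead exhibit preimages using Bézout coefficients $e_i\equiv 1 \pmod{M_i}$ and $e_i\equiv 0\pmod{M_j}$ for $j\ne i$, but the cardinality argument avoids this.
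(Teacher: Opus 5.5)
Your proof is correct and follows the same route as the paper: both reduce the count to the bijection $\mathbb{Z}/M\mathbb{Z}\to\prod_{i}\mathbb{Z}/M_i\mathbb{Z}$ and count the preimage of $A_1\times\cdots\times A_k$. The only difference is that the paper cites this bijection as the CRT, while you also prove it (injectivity from $\operatorname{lcm}(M_1,\dots,M_k)=M$, surjectivity by counting), which is harmless extra detail.
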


\begin{proof}
The CRT states that the reduction map
\[
\mathbb{Z}/M\mathbb{Z}\ \longrightarrow\ \prod_{i=1}^k \mathbb{Z}/M_i\mathbb{Z}
\]
is a bijection. Therefore specifying residues modulo each $M_i$ independently
specifies a unique residue modulo $M$.
Counting choices gives the product formula.
\end{proof}

\subsection{Inclusion-Exclusion Framework}

The proof of Theorems~\ref{thm:main} and \ref{thm:base_b} is based on an inclusion-exclusion argument,
combined with some analytic number theory.
For Theorem~\ref{thm:main},
we have the digit set $d\in\Ddigits:=\{0, 1, \dots, 9\}.$  Furthermore, define the statement
\[
A_d(n):=\text{``$10n+d$ is square-free''}.
\]
Therefore, $n$ is a dead end exactly when $n$ is square-free and \emph{none} of the $A_d(n)$ hold.
Using the identity $\1_{\text{$m$ square-free}}=\mu(m)^2$ (see \eqref{sqrfree}),
we obtain the dead-end indicator identity
\begin{equation}
\1_{\text{$n$ is a dead end}}
=\mu(n)^2\prod_{d\in\Ddigits}\bigl(1-\mu(10n+d)^2\bigr).
\end{equation}
Indeed, the $\mu(n)^2$ factor guarantees that $n$ is square-free,
while the vanishing of products requires each of the $10n+d$ values is non-square-free.

Expanding this product by inclusion--exclusion yields the following closed
formula for the counting function $D(X).$

\begin{proposition}[Exact digit inclusion--exclusion]\label{prop:IE}
For every $X\ge 1$, we have
\[
D(X)=\sum_{S\subseteq \Ddigits} (-1)^{\abs{S}}\,Q_S(X),
\]
where
\[
Q_S(X):=\sum_{n\le X}\mu(n)^2\prod_{d\in S}\mu(10n+d)^2.
\]
\end{proposition}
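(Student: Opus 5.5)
The plan is to sum the dead-end indicator identity over $n\le X$ and then expand the product over digits algebraically. Since $D(X)=\sum_{n\le X}\1_{\text{$n$ is a dead end}}$, the identity just established gives
\[
D(X)=\sum_{n\le X}\mu(n)^2\prod_{d\in\Ddigits}\bigl(1-\mu(10n+d)^2\bigr).
\]
This reduces the proposition to a pointwise polynomial identity in the ten quantities $x_d:=\mu(10n+d)^2$, followed by an exchange of two finite sums.

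The key algebraic step is the standard expansion of a product of binomials over a finite index set:
\[
\prod_{d\in\Ddigits}(1-x_d)=\sum_{S\subseteq\Ddigits}(-1)^{\abs{S}}\prod_{d\in S}x_d .
\]
This holds in any commutative ring. It can be proved by induction on the size of the index set: one splits the subsets according to whether they contain a chosen element, and the empty product contributes $1$ for $S=\emptyset$. We apply it with $x_d=\mu(10n+d)^2$ and multiply through by $\mu(n)^2$. Then we interchange the finite sum over $n\le X$ with the finite sum over the $2^{10}$ subsets $S$. Collecting terms gives exactly $\sum_S(-1)^{\abs{S}}Q_S(X)$, with $Q_S(X)$ as defined in the statement.

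No step here is a genuine obstacle, because every sum is finite and no analytic estimates are needed. The only point requiring care is the correctness of the indicator identity itself, which was already justified above via \eqref{sqrfree}. Each factor $1-\mu(10n+d)^2$ equals $1$ exactly when $10n+d$ is not square-free and equals $0$ otherwise, so the full product is $1$ precisely when $n$ is a dead end. The identity therefore holds for every real $X\ge1$, with $n$ ranging over positive integers up to $X$.
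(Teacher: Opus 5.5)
Your proposal is correct and matches the paper's proof: you sum the indicator identity $\mu(n)^2\prod_{d\in\Ddigits}\bigl(1-\mu(10n+d)^2\bigr)$ over $n\le X$, expand the product over subsets $S$ with sign $(-1)^{|S|}$, and interchange the two finite sums. One small wording fix: it is the digit product \emph{together with} the factor $\mu(n)^2$ that equals $1$ exactly when $n$ is a dead end.
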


\begin{proof}
Expand $\prod_{d\in\Ddigits}(1-\mu(10n+d)^2)$ into a sum over subsets $S\subseteq\Ddigits$,
where choosing $d\in S$ contributes the factor $-\mu(10n+d)^2$ and choosing $d\notin S$ contributes $1$.
Then sum over $n\le X$.
\end{proof}

Therefore, the proof of Theorem~\ref{thm:main} reduces to asymptotics for $Q_S(X)$ for each fixed $S$.

\subsection{Asymptotics for $Q_S(X)$}

Fix a subset $S\subseteq\Ddigits$ and write $r=\abs{S}$.
Then $Q_S(X)$ counts those integers $n\le X$ for which $n$ and all one-digit extensions $10n+d$ ($d\in S$)
are square-free:
\begin{equation}\label{QSX}
  \begin{aligned}
    Q_S(X) &=\sum_{n\le X}\mu(n)^2\prod_{d\in S}\mu(10n+d)^2 \\
    &=\#\{n\le X:\ n\ \text{and}\ 10n+d \text{ are square-free for all } d \in S\}.
  \end{aligned}
\end{equation}
A positive integer is square-free if and only if it is not divisible by $p^2$ for any prime $p$.
Therefore, the condition counted by $Q_S(X)$ can be expressed prime-by-prime.
Namely, for each prime $p,$ define the set of \emph{bad} residues modulo $p^2$
\[
B_p(S):=\Bigl\{n\bmod p^2:\ p^2\mid n\ \text{or}\ p^2\mid(10n+d)\ \text{for some }d\in S\Bigr\},
\]
and let $\nu_p(S):=\abs{B_p(S)}$.

To obtain asymptotics for $Q_S(X)$, we carefully sift by finitely many prime squares.
To this end, let $z\ge 2$ and set
\[
  P(z):=\prod_{\substack{p\leq z\\ p \text{ prime}}} p \qquad {\text {\rm and}}\qquad M(z):=P(z)^2.
\]
Define the \emph{$z$--sifted} counting function
\[
Q_S(X;z):=\#\{n\le X:\ n\bmod p^2\notin B_p(S)\ \text{for every prime }p\le z\},
\]
and the corresponding finite Euler product
\begin{equation}\label{CzS}
  C_z(S):=\prod_{\substack{p\le z\\ p \text{ prime}} }\left(1-\frac{\nu_p(S)}{p^2}\right).
\end{equation}

\begin{lemma}[Counting the $z$--sifted set]\label{lem:z-sifted}
For every $X\ge 1$ and every $z\ge 2$, we have
\[
Q_S(X;z)=C_z(S)\,X+O(M(z)).
\]
\end{lemma}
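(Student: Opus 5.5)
The plan is to show that the condition defining $Q_S(X;z)$ is periodic modulo $M(z)$, count the admissible residue classes with Lemma~\ref{lem:crt}, and then count integers $n\le X$ in those classes.

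\textbf{Step 1: periodicity.} For each prime $p\le z$, whether $n \bmod p^2$ lies in $B_p(S)$ depends only on $n$ modulo $p^2$. Since $p^2\mid M(z)$, the condition ``$n\bmod p^2\notin B_p(S)$ for every prime $p\le z$'' depends only on $n \bmod M(z)$. Let $\mathcal{A}_z(S)\subseteq \mathbb{Z}/M(z)\mathbb{Z}$ be the set of admissible classes, meaning those whose reduction modulo $p^2$ avoids $B_p(S)$ for every $p\le z$.

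\textbf{Step 2: counting admissible classes.} The moduli $p^2$ for $p\le z$ are pairwise coprime, and their product is $M(z)$. Apply Lemma~\ref{lem:crt} with $A_p:=(\mathbb{Z}/p^2\mathbb{Z})\setminus B_p(S)$, which has $|A_p| = p^2-\nu_p(S)$. This gives
\[
\abs{\mathcal{A}_z(S)}=\prod_{p\le z}\bigl(p^2-\nu_p(S)\bigr)=M(z)\,C_z(S).
\]

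\textbf{Step 3: summing over classes.} For each fixed class $a\in\mathcal{A}_z(S)$, the number of $1\le n\le X$ with $n\equiv a \pmod{M(z)}$ is $X/M(z)+\theta_a$. To keep the error at $O(M(z))$ rather than something worse, I will use the sharper elementary bound $|\theta_a|\le 1$, valid uniformly in $a$ and $M(z)$. Summing over the $M(z)C_z(S)$ admissible classes gives
\[
Q_S(X;z)=\sum_{a\in\mathcal{A}_z(S)}\Bigl(\frac{X}{M(z)}+\theta_a\Bigr)=C_z(S)\,X+O\bigl(\abs{\mathcal{A}_z(S)}\bigr).
\]
Since $\abs{\mathcal{A}_z(S)}\le M(z)$, this is the claimed estimate with an absolute implied constant. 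When $X<M(z)$ the statement is still consistent, because both $Q_S(X;z)$ and $C_z(S)X$ are then $O(M(z))$.

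There is no real obstacle here. The only point needing care is the uniformity of the $O(1)$ error per residue class, since it is summed over up to $M(z)$ classes. The bound $|\theta_a|\le 1$ handles this, so the implied constant depends neither on $S$ nor on $z$.
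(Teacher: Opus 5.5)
Your proposal is correct and follows the paper's proof essentially step for step. Both count the $\prod_{p\le z}(p^2-\nu_p(S))$ admissible classes modulo $M(z)$ via the CRT, then use $X/M(z)+O(1)$ integers per class together with $\prod_{p\le z}(p^2-\nu_p(S))\le M(z)$; your explicit bound $|\theta_a|\le 1$ just spells out the absolute implied constant that the paper's proof also relies on.
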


\begin{proof}
For a fixed prime $p\le z$, the condition ``$n\bmod p^2\notin B_p(S)$'' excludes
exactly $\nu_p(S)$ residue classes modulo $p^2$,
so it allows exactly $p^2-\nu_p(S)$ residue classes modulo $p^2$.
Since the moduli $p^2$ are pairwise coprime as $p$ varies over primes,
the CRT (Lemma~\ref{lem:crt}) implies that the number of residue
classes modulo
\[
  M(z)=\prod_{\substack{p\le z\\ p\text{ prime}}} p^2
\]
that are simultaneously allowed for every prime $p\le z$ is exactly
\[
G_z(S):=\prod_{p\le z}\bigl(p^2-\nu_p(S)\bigr).
\]
Each such residue class modulo $M(z)$ contributes $\frac{X}{M(z)}+O(1)$ integers $n\le X$.
Summing over the $G_z(S)$ allowed classes gives
\[
Q_S(X;z)=G_z(S)\left(\frac{X}{M(z)}+O(1)\right)
=\frac{G_z(S)}{M(z)}\,X+O\!\bigl(G_z(S)\bigr).
\]
Finally, $G_z(S)\le M(z)$, so the error term is $O(M(z))$, and
\[
\frac{G_z(S)}{M(z)}=\prod_{p\le z}\left(1-\frac{\nu_p(S)}{p^2}\right)=C_z(S). \qedhere
\]
\end{proof}

The previous lemma only takes into account the primes $p\leq z.$ The next lemma
bounds the contributions to $Q_S(X)$ from the larger primes.

\begin{lemma}[Bounding the contribution of large prime squares]\label{lem:large-primes}
If $z\ge 5$, then for every $X\ge 1$ we have
\[
Q_S(X)=Q_S(X;z)+O\!\left((\abs{S}+1)\left(\frac{X}{z}+\sqrt{X}\right)\right).
\]
\end{lemma}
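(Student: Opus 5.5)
Since $Q_S(X)$ counts a subset of what $Q_S(X;z)$ counts, $0\le Q_S(X;z)-Q_S(X)$. I will bound this difference by the number of $n\le X$ that pass the sieve for $p\le z$ but fail it for some prime $p>z$.

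Such an $n$ satisfies $p^2\mid n$ or $p^2\mid 10n+d$ for some $d\in S$ and some prime $p>z$. The union bound then gives
\[
0\le Q_S(X;z)-Q_S(X)\le \sum_{p>z}\Bigl(\#\{n\le X: p^2\mid n\}+\sum_{d\in S}\#\{n\le X: p^2\mid 10n+d\}\Bigr).
\]
This is a sum of $\abs{S}+1$ terms of the same shape, which is where the factor $(\abs{S}+1)$ comes from.

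\textbf{The $n$-term.} The inner count is at most $X/p^2$, and it vanishes once $p>\sqrt X$. So it contributes at most $X\sum_{p>z}p^{-2}\le X\sum_{m>z}m^{-2}\le X/(z-1)\ll X/z$.

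\textbf{The $d$-terms.} Fix $d\in S$.
\begin{itemize}
\item For a prime $p>z\ge 5$ we have $p\nmid 10$, so $10$ is invertible mod $p^2$. Hence $p^2\mid 10n+d$ pins $n$ to a single residue class mod $p^2$, and the count is at most $X/p^2+1$.
\item The count is zero unless $p^2\le 10X+d\le 10X+9$, i.e.\ $p\le\sqrt{10X+9}\ll\sqrt X$ for $X\ge1$.
\end{itemize}
Summing over primes $z<p\ll\sqrt X$ gives at most
\[
X\sum_{p>z}p^{-2}+\#\{p\ll\sqrt X\}\ll X/z+\sqrt X .
\]
For the prime count I use the trivial bound $\pi(y)\le y$; there is no need even for Chebyshev.

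\textbf{Totals.} Adding the $\abs{S}+1$ contributions gives $O\bigl((\abs{S}+1)(X/z+\sqrt X)\bigr)$, with an absolute implied constant. The hypothesis $z\ge5$ ensures $p\nmid 10$ for every prime $p>z$, which is what makes the single-residue-class claim valid. For a general base $b$ the same argument would need $z\ge b$.

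\textbf{Main obstacle.} The only delicate point is the $+1$ error from each residue-class count. It must be summed only over primes up to $\sqrt{10X+9}$, not over all $p>z$; otherwise the sum diverges. I will therefore truncate explicitly before applying the $X/p^2+O(1)$ count. Everything else is routine.
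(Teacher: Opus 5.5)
Your proposal is correct and follows essentially the same route as the paper. Both arguments apply a union bound over primes $p>z$ and the $\abs{S}+1$ linear forms, use the invertibility of $10$ modulo $p^2$ for $p\ge 7$ to get $X/p^2+O(1)$ solutions per form, truncate at $p\le\sqrt{10X+9}$, and finish with $\sum_{p>z}p^{-2}\ll 1/z$ and $\pi(\sqrt{10X+9})\le\sqrt{10X+9}\ll\sqrt X$. Your explicit remark that $Q_S(X)\le Q_S(X;z)$, so the difference is nonnegative, is a small clarification the paper leaves implicit.
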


\begin{proof}
Any $n$ counted by $Q_S(X;z)$ but \emph{not} counted by $Q_S(X)$ has the property that
for some prime $p>z$ at least one of the integers
\[
n,\quad 10n+d\ (d\in S)
\]
is divisible by $p^2$. Since $10n+d\le 10X+9$ for $n\le X$,
such a prime must satisfy $p^2\le 10X+9$ (i.e.\ $p\le \sqrt{10X+9}$).
Since $z\ge 5$, every prime $p>z$ satisfies $p\ge 7$, and hence $10$ is invertible modulo $p^2$.
Therefore, for each $d\in S$ the congruence $p^2\mid (10n+d)$ forces $n$ into
exactly one residue class modulo $p^2$,
and the congruence $p^2\mid n$ forces $n\equiv 0\pmod{p^2}$.
In either case, the number of solutions $n\le X$ is $\frac{X}{p^2}+O(1)$.

By the union bound, the number of $n\le X$ excluded by primes $p>z$ is therefore at most
\[
  \sum_{\substack{p>z\\ p \text{ prime}}}\ \sum_{\substack{\text{forms }L\\ L\in\{n\}\cup\{10n+d:\ d\in S\}}}
\left(\frac{X}{p^2}+O(1)\right),
\]
where the prime sum may be restricted to $p\le \sqrt{10X+9}$.
There are exactly $\abs{S}+1$ forms $L$, so this is
\[
(\abs{S}+1)\left(X\sum_{z<p\le \sqrt{10X+9}}\frac{1}{p^2}\ +\ O\!\bigl(\pi(\sqrt{10X+9})\bigr)\right),
\]
where $\pi(y)$ denotes the number of primes $\le y$.
We bound the two quantities crudely but explicitly:
\[
  \sum_{\substack{z<p\le \sqrt{10X+9}\\ p \text{ prime}}}\frac{1}{p^2}\ \le\ \sum_{n>z}\frac{1}{n^2}\ \ll\ \frac{1}{z}
\qquad {\text{and}}\qquad
\pi(\sqrt{10X+9})\le \sqrt{10X+9}\ \ll\ \sqrt{X}.
\]
Combining these bounds gives the claimed error term.
\end{proof}

It is important to consider the limiting behavior of the constants in \eqref{CzS}  as $z\rightarrow +\infty.$

\begin{lemma}[Convergence of the Euler product]\label{lem:euler-conv}
The infinite product
\[
  C(S):=\prod_{p\text{ prime}}\left(1-\frac{\nu_p(S)}{p^2}\right)
\]
converges absolutely. Moreover, for every $z\ge 7$, we have
\[
C(S)=C_z(S)+O\!\left(\frac{1}{z}\right).
\]
\end{lemma}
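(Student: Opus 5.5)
The plan is to bound $\nu_p(S)$ uniformly for large primes and then run the standard estimate for tails of convergent products. For $p\ge 7$ we have $\gcd(10,p)=1$. So, exactly as in the proof of Lemma~\ref{lem:large-primes}, the condition $p^2\mid n$ picks out one residue class modulo $p^2$, and each condition $p^2\mid(10n+d)$ with $d\in S$ picks out one residue class. Hence
\[
\nu_p(S)\le \abs{S}+1\le 11 \qquad (p\ge 7).
\]
For the finitely many primes $p\in\{2,3,5\}$ we only need the trivial bound $0\le \nu_p(S)\le p^2$. Each of these factors therefore lies in $[0,1]$. They are finitely many, so they cause no convergence issue: if one of them vanishes, both $C(S)$ and $C_z(S)$ vanish for every $z\ge 5$, and the claim is trivial.

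Next, absolute convergence. Set $a_p:=\nu_p(S)/p^2$. For $p\ge 7$ we have $0\le a_p\le 11/p^2\le 11/49<1$, and $\sum_p a_p\le 11\sum_n n^{-2}<\infty$. By the standard criterion, $\prod(1-a_p)$ then converges absolutely, and the tail product over $p\ge 7$ is strictly positive. Concretely, I would pass to logarithms, using $\abs{\log(1-a)}\le 2a$ for $0\le a\le 1/2$; this needs $a_p\le 1/2$, which fails only if $11/p^2>1/2$, i.e.\ $p\le 4$. Combining this tail with the finitely many factors for $p\le 5$ gives the limit $C(S)=\lim_{z\to\infty}C_z(S)$.

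For the rate, take $z\ge 7$ and write $C(S)=C_z(S)\,T_z$, where $T_z:=\prod_{p>z}(1-a_p)$. Then
\[
\abs{\log T_z}\le 2\sum_{p>z}a_p\le 22\sum_{n>z}\frac1{n^2}\le \frac{22}{z-1}\ll\frac1z .
\]
Using $\abs{e^{-x}-1}\le x$ for $x\ge0$, this gives $\abs{T_z-1}\ll 1/z$. Finally, since $0\le C_z(S)\le 1$ (every factor lies in $[0,1]$),
\[
\abs{C(S)-C_z(S)}=C_z(S)\,\abs{T_z-1}\ll \frac1z ,
\]
with an absolute implied constant; it is even uniform in $S$.

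None of the steps is a real obstacle. The only points needing care are the bound $\nu_p(S)\le\abs{S}+1$, which needs $p\nmid 10$, and the observation that the small primes are harmless because they are finitely many and their factors are bounded by $1$.
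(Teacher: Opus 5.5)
Your proposal is correct and follows essentially the same route as the paper: you bound $\nu_p(S)\le \abs{S}+1\le 11$ uniformly for $p\ge 7$, then use $\abs{\log(1-u)}\le 2u$ on $[0,1/2]$ to get both absolute convergence and $\prod_{p>z}\bigl(1-\nu_p(S)/p^2\bigr)=1+O(1/z)$, and then factor $C(S)=C_z(S)\cdot\prod_{p>z}(\cdots)$. You also make explicit two points the paper leaves implicit: that $0\le C_z(S)\le 1$ is what turns the multiplicative error into an additive $O(1/z)$, and that a vanishing small-prime factor (which does occur, e.g.\ $\nu_2(S)=4$) makes the claim trivial.
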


\begin{proof}
For the primes $p\le 5$, the factors $\left(1-\frac{\nu_p(S)}{p^2}\right)$ are
well-defined real numbers in $[0,1]$.
We therefore focus on primes $p\ge 7$. Since $p\nmid 10$, the number $10$ is invertible modulo $p^2$.
Thus $p^2\mid n$ forces $n\equiv 0\pmod{p^2}$ (one residue class), and for each digit $d\in S$ the congruence
$p^2\mid(10n+d)$ forces $n$ into a single residue class modulo $p^2$.
Taking the union over $d\in S$ shows that
\[
\nu_p(S)\le 1+\abs{S}\le 11.
\]
In particular, for every $p\ge 7$ we have $0\le \nu_p(S)/p^2\le 11/49<1/2$.

Now consider the partial products over primes in $[7,N]$:
\[
P_N:=\prod_{\substack{7\le p\le N\\ p \text{ prime}}}\left(1-\frac{\nu_p(S)}{p^2}\right).
\]
Taking logarithms and using the inequality $\abs{\log(1-u)}\le 2u$ valid for $0\le u\le 1/2$, we obtain
\[
  \sum_{\substack{p\ge 7\\ p\text{ prime}}}\abs{\log\!\left(1-\frac{\nu_p(S)}{p^2}\right)}
  \le 2\sum_{\substack{p\ge 7\\ p \text{ prime}}}\frac{\nu_p(S)}{p^2}
  \ll \sum_{\substack{p\ge 7\\ p\text{ prime}}}\frac{1}{p^2}<\infty.
\]
Therefore the series $\sum_{p\ge 7}\log\!\left(1-\frac{\nu_p(S)}{p^2}\right)$ converges absolutely,
so the sequence $\log P_N$
converges as $N\to\infty$, and hence $P_N$ converges to a finite nonzero limit.
This proves absolute convergence of $C(S)$.

For the tail estimate, write
\[
  C(S)=C_z(S)\cdot \prod_{\substack{p>z\\ p \text{ prime}}}\left(1-\frac{\nu_p(S)}{p^2}\right).
\]
For $z\ge 7,$ we have $0\le \nu_p(S)/p^2\le 11/49<1/2$ for every $p>z$,
so again using $\abs{\log(1-u)}\le 2u$ gives
\[
  \abs{\log\!\left(\prod_{\substack{p>z\\ p\text{ prime}}}\left(1-\frac{\nu_p(S)}{p^2}\right)\right)}
  \le 2\sum_{\substack{p>z\\ p\text{ prime}}}\frac{\nu_p(S)}{p^2}
\ll \sum_{n>z}\frac{1}{n^2}\ll \frac{1}{z}.
\]
Exponentiating yields $\prod_{p>z}\left(1-\frac{\nu_p(S)}{p^2}\right)=1+O(1/z)$,
and hence $C(S)=C_z(S)+O(1/z)$.
\end{proof}

Assembling these observations, we obtain the final asymptotic for $Q_S(X).$

\begin{proposition}[Asymptotic for $Q_S(X)$]\label{prop:QS-asymp}
For all $X\ge 3$, we have
\[
Q_S(X)=C(S)\,X+O\!\left(\frac{X}{\sqrt{\log X}}\right),
\]
where
\[
  C(S)=\prod_{p \text{ prime}}\left(1-\frac{\nu_p(S)}{p^2}\right).
\]
\end{proposition}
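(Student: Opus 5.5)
We combine the three preceding lemmas and then choose the sieving parameter $z$ as a function of $X$. Fix $S\subseteq\Ddigits$, so that $\abs{S}+1\le 11$ is absolutely bounded. For any $z\ge 7$ and $X\ge 1$, Lemma~\ref{lem:large-primes} followed by Lemma~\ref{lem:z-sifted} gives
\[
Q_S(X)=C_z(S)\,X+O\!\left(M(z)+\frac{X}{z}+\sqrt{X}\right).
\]
Lemma~\ref{lem:euler-conv} then lets us replace $C_z(S)$ by $C(S)$ at the cost of $O(X/z)$, which is already present. Hence
\[
Q_S(X)=C(S)\,X+O\!\left(M(z)+\frac{X}{z}+\sqrt{X}\right)
\]
uniformly for $z\ge 7$.

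The main obstacle is that $M(z)=P(z)^2$ grows exponentially in $z$. By Chebyshev's bound $\theta(z)=\sum_{p\le z}\log p\le c_0 z$ for an absolute constant $c_0$ (e.g.\ $c_0=\log 4$), we have $M(z)\le e^{2c_0 z}$. This forces $z$ to be of order $\log X$, so the term $X/z$ can only be as small as $X/\log X$. That is in fact stronger than the claimed $X/\sqrt{\log X}$, so we have room to spare. Concretely, we take
\[
z:=\max\Bigl(7,\ \tfrac{1}{4c_0}\log X\Bigr).
\]
When $z$ is the second argument, $M(z)\le X^{1/2}$, and then $X/z\ll X/\log X\le X/\sqrt{\log X}$ while $\sqrt{X}\ll X/\sqrt{\log X}$. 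If one prefers to avoid Chebyshev, the trivial bound $P(z)\le z!\le z^z$ with $z\asymp \log X/\log\log X$ still gives $X/z\ll X\log\log X/\log X\ll X/\sqrt{\log X}$.

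The small-$X$ range needs a separate check. When $\tfrac{1}{4c_0}\log X<7$, $X$ is bounded by an absolute constant. In that range the statement holds trivially, since $0\le Q_S(X)\le X$ and $0\le C(S)\le 1$, while $X/\sqrt{\log X}\gg X$ for $3\le X\ll 1$. The condition $X\ge 3$ is exactly what guarantees $\log X>1$, so the error term makes sense throughout.

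Thus the only genuine input beyond the lemmas is the estimate $\log M(z)=2\theta(z)\ll z$. I would state it explicitly, together with the choice of $z$, and verify that every term in the combined error is $O(X/\sqrt{\log X})$. The implied constant is uniform in $S$, because $\abs{S}\le 10$.
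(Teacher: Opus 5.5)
Your proof is correct and follows the paper's own argument: you combine the three preceding lemmas to get $Q_S(X)=C(S)X+O(M(z)+X/z+\sqrt{X})$ for $z\ge 7$, then choose $z$ so that $M(z)$ is negligible. The only difference is the parameter choice. The paper takes $z=\max\{7,\lfloor\sqrt{\log X}\rfloor\}$ with the crude bound $M(z)\le z^{2z}$, which gives exactly $O(X/\sqrt{\log X})$; your Chebyshev-based choice $z\asymp\log X$ yields the sharper $O(X/\log X)$, and you also treat the bounded-$X$ range explicitly, which the paper leaves implicit.
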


\begin{proof}
Let $z:=\max\{7,\lfloor \sqrt{\log X}\rfloor\}$, so $z\ge 7$ and $z\to\infty$ as $X\to\infty$.
Combining Lemma~\ref{lem:z-sifted} with Lemma~\ref{lem:large-primes}, we obtain
\[
Q_S(X)=C_z(S)\,X+O(M(z))\ +\ O\!\left((\abs{S}+1)\left(\frac{X}{z}+\sqrt{X}\right)\right).
\]
By Lemma~\ref{lem:euler-conv}, we have $C(S)=C_z(S)+O(1/z)$, which in turn gives
\[
C_z(S)\,X = C(S)\,X + O\!\left(\frac{X}{z}\right).
\]
Combining the last two expressions gives
\[
Q_S(X)=C(S)\,X + O\!\left(M(z)+\frac{X}{z}+\sqrt{X}\right).
\]

It remains to bound $M(z)$ in terms of $X$.
Since
\[ P(z)=\prod_{p\le z} p \le z^{\pi(z)}\le z^{\,z} \]
we obtain
\[ M(z)=P(z)^2\le z^{2z}.  \]
With $z=\max\{7,\lfloor \sqrt{\log X}\rfloor\}$, we have $\log z\ll \log\log X$, so
\[
\log M(z)\le 2z\log z \ll \sqrt{\log X}\,\log\log X=o(\log X).
\]
In particular, $M(z)=o\!\left(\frac{X}{\sqrt{\log X}}\right)$ as $X\to\infty$.
Also, since $z\asymp \sqrt{\log X}$ and $\sqrt{X}=o\!\left(\frac{X}{\sqrt{\log X}}\right)$, we conclude that
\[
Q_S(X)=C(S)\,X+O\!\left(\frac{X}{\sqrt{\log X}}\right),
\]
as claimed.
\end{proof}

\subsection{Proof of Theorem \ref{thm:main}}
Finally, combine Proposition~\ref{prop:IE} with Proposition~\ref{prop:QS-asymp}.
We obtain
\[
D(X)=\sum_{S\subseteq \Ddigits}(-1)^{\abs{S}}\left(C(S)\,X+O\!\left(\frac{X}{\sqrt{\log X}}\right)\right)
=\left(\sum_{S\subseteq \Ddigits}(-1)^{\abs{S}}C(S)\right)X+O\!\left(\frac{X}{\sqrt{\log X}}\right).
\]
The constant in parentheses is exactly $c_{\mathrm{dead}}$ as defined in Theorem~\ref{thm:main}.
Since there are only $2^{10}=1024$ subsets $S$,
the error term remains $O\!\left(\frac{X}{\sqrt{\log X}}\right)$.
This completes the proof.
\qed

\subsection{Proof of Theorem~\ref{thm:base_b}}
The proof is a direct adaptation of the base-10 argument in Theorem~\ref{thm:main},
with the digit set and the relevant
congruences replaced by their base-$b$ analogues. Here we explain how this is done.

Fix an integer base $b\geq 2$ and write
\[
\mathcal{D}_b:=\{0,1,\dots,b-1\}.
\]
A positive integer $N$ is a base-$b$ dead end if $N$ is square-free and $bN+d$ is not square-free for every
$d\in\mathcal{D}_b$. Using $\mathbf{1}_{m\ \mathrm{square\text{-}free}}=\mu(m)^2,$ we obtain the modified indicator
identity
\[
\mathbf{1}_{N\ \mathrm{is\ a\ base\text{-}b\ dead\ end}}
=\mu(N)^2\prod_{d\in\mathcal{D}_b}\bigl(1-\mu(bN+d)^2\bigr),
\]
and expanding the product yields the inclusion--exclusion formula
\[
D_b(X)=\sum_{S\subseteq\mathcal{D}_b}(-1)^{|S|}\sum_{N\le X}\mu(N)^2\prod_{d\in S}\mu(bN+d)^2.
\]

For each prime $p$ and $S\subseteq\mathcal{D}_b$, define the local obstruction count
\[
\nu_{p,b}(S):=\#\Bigl\{n\bmod p^2:\ p^2\mid n\ \text{or}\ p^2\mid (bn+d)\ \text{for some }d\in S\Bigr\}.
\]
Exactly as in the proof of Theorem~\ref{thm:main},
one analyzes the congruences modulo $p^2$ and uses the Chinese remainder theorem
to conclude that the main term for each fixed $S$ is an Euler product with local factor
$1-\nu_{p,b}(S)/p^2$. Summing over $S$ gives the constant $c_{\mathrm{dead}}(b)$
from Theorem~\ref{thm:base_b}.

If $p\nmid b$, then $b$ is invertible modulo $p^2$ and each condition
$p^2\mid (bn+d)$ is equivalent to the single
congruence
\[
n\equiv -d\,b^{-1}\pmod{p^2}.
\]
Thus, for primes with $p^2>b$ these residue classes are distinct as $d$ varies,
so (up to the overlap when $d=0$)
one has the same phenomenon as in base $10$: a single prime square can obstruct at most one digit at a time.
This yields the base-$b$ analogue of the computation of $\nu_{p,b}(S)$ for all
sufficiently large primes $p\nmid b$.

When $p\mid b$, $b$ is not invertible modulo $p^2$,
so the congruence $p^2\mid (bn+d)$ requires separate treatment
(the analogue of our special handling of $p=2$ and $p=5$ in base $10$).
Likewise, for the finitely many primes with
$p^2\le b$, distinct digits may collide modulo $p^2$,
and these are again handled by a direct residue-class count.
Since there are only finitely many such primes (depending on $b$),
this only modifies finitely many Euler factors and does
not affect the structure of the argument.

Combining the inclusion--exclusion expansion with the prime-by-prime sieve computation,
we obtain the stated asymptotic
\[
D_b(X)=c_{\mathrm{dead}}(b)\,X+O_b\!\left(\frac{X}{\sqrt{\log X}}\right),
\]
with $c_{\mathrm{dead}}(b)$ given by the finite alternating sum of Euler products in Theorem~\ref{thm:base_b}.
\qed

\section{AxiomProver}\label{sec:AxiomProver}
At Axiom Math, we are developing AxiomProver, an AI system for mathematical research via formal proof.
As an early test case in this effort, we present this case study,
treating Theorem~\ref{thm:base_b} as an end-to-end formalization target.
We chose them because we believed that they are within reach of today's Mathlib.

This paper confirms that expectation: the proof is fully formalized in Lean/Mathlib
(see \cite{Lean,Mathlib2020})
and was produced by AxiomProver from a natural-language statement
of the Main Problem (without the answer given).
We now make precise what ``produced'' means, and describes the end-to-end pipeline we used.

\subsection*{Process}
The formal proofs provided in this work were developed and verified using \textbf{Lean 4.26.0}.
Compatibility with earlier or later versions is not guaranteed due to the evolving nature of the Lean 4 compiler and its core libraries.
The relevant files are all posted in the following repository:
\begin{center}
  \url{https://github.com/AxiomMath/dead-ends}
\end{center}
The input files were
\begin{itemize}
  \item \texttt{dead-ends.tex}, a natural-language statement of the Main Problem for all bases $b \ge 2$
    (i.e.\ finding a formula for $c_{\mathrm{dead}}(b)$ and proving its correctness);
  \item a \texttt{task.md} that contains the single line
  \begin{quote}
    \slshape
    Read dead-ends.tex.
  \end{quote}
  \item a configuration file \texttt{.environment} that contains the single line
  \begin{quote}
    \slshape
    lean-4.26.0
  \end{quote}
  which specifies to AxiomProver which version of Lean should be used.
\end{itemize}
Given these three input files,
AxiomProver autonomously provided the following output files:
\begin{itemize}
  \item \texttt{problem.lean}, a Lean 4.26.0 formalization of the problem statement
which includes the autonomously computed formula for $c_{\mathrm{dead}}(b)$; and
  \item \texttt{solution.lean}, a complete Lean 4.26.0 formalization of the proof.
\end{itemize}

After AxiomProver generated a solution, the human authors wrote this paper
(without the use of AI) for human readers.
Indeed, a research paper is a narrative designed to communicate ideas to humans,
whereas a Lean files are designed to satisfy a computer kernel.

\section{Appendix: Computing $\nu_p(S)$ and $c_{\mathrm{dead}}$}\label{Appendix:Numerics}

This appendix records two additional facts:
\begin{itemize}
\item[(A)] one can compute $\nu_p(S)$ explicitly for each prime $p$ and each digit subset $S$;
\item[(B)] using these formulas, one can evaluate $c_{\mathrm{dead}}$ numerically to high precision.
\end{itemize}

\subsection{Explicit computation of $\nu_p(S)$}

Here we explicitly compute the $\nu_p(S)$. For primes $p\geq 7,$ this reduces to whether or not $0$ is in $S$.

\begin{proposition}[Primes $p\ge 7$]\label{prop:nup-large}
Let $p\ge 7$ be prime and let $S\subseteq\Ddigits$. Then $10$ is invertible modulo $p^2$ and
\[
\nu_p(S)=1+\abs{S}-\1_{0\in S}.
\]
\end{proposition}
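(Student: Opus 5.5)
The plan is to describe the bad set $B_p(S)$ explicitly as a union of residue classes, then count it by showing the classes are pairwise distinct except for one forced coincidence.

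First, since $p\ge 7$, we have $p\nmid 10$, so $\gcd(10,p^2)=1$ and $10$ has an inverse $10^{-1}$ modulo $p^2$. Hence, for each digit $d$, the condition $p^2\mid(10n+d)$ holds exactly when $n\equiv -d\cdot 10^{-1}\pmod{p^2}$. The condition $p^2\mid n$ is the single class $n\equiv 0$. Therefore
\[
B_p(S)=\{0\}\cup\{-d\cdot 10^{-1}\bmod p^2 : d\in S\},
\]
a union of at most $1+\abs{S}$ residue classes.

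Next we determine which of these classes coincide. Two digits $d,d'\in S$ give the same class exactly when $10^{-1}(d-d')\equiv 0$, that is, when $p^2\mid (d-d')$. But $\abs{d-d'}\le 9<49\le p^2$, so this forces $d=d'$. Likewise, the class of a digit $d$ equals the class $0$ exactly when $p^2\mid d$, which for $0\le d\le 9<p^2$ happens only when $d=0$.

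So the $\abs{S}$ digit classes are pairwise distinct. All of them are distinct from $0$, except that the class for $d=0$ is $0$ itself. Counting gives $\nu_p(S)=1+\abs{S}-\1_{0\in S}$.

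There is no real obstacle here. The only point needing care is the bound $\abs{d-d'}\le 9<p^2$, and this is exactly where the hypothesis $p\ge 7$ is used (already $p\ge 5$ would give $p^2>9$, but $p\ne 5$ is needed for invertibility).
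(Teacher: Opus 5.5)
Your proposal is correct and follows essentially the same argument as the paper: invert $10$ modulo $p^2$ to get one class $n\equiv -d\cdot 10^{-1}$ per digit, use $p^2\ge 49>9$ to make these classes pairwise distinct, and note that only the class for $d=0$ coincides with $n\equiv 0$. Your explicit check that a nonzero digit's class is never $0$, because $p^2\nmid d$ for $1\le d\le 9$, is a step the paper leaves implicit, and including it is fine.
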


\begin{proof}
Modulo $p^2$, the congruence $p^2\mid n$ forces $n\equiv 0$ (one residue class).
For each $d\in S$, the congruence $p^2\mid(10n+d)$ is equivalent to
\[
n\equiv -d\cdot 10^{-1}\pmod{p^2},
\]
so it specifies exactly one residue class.
If $d\neq d'$, then $d\not\equiv d'\pmod{p^2}$ because $p^2\ge 49>9$, so these residue classes are distinct.
The class for $d=0$ coincides with $n\equiv 0$, giving the stated union size.
\end{proof}

The primes $2,3,5$ are special because $10$ is not invertible modulo $4$ or $25$,
and because $9\le 10$ introduces a collision modulo $9$.

\begin{proposition}[The prime $p=3$]\label{prop:nup3}
Since $10\equiv 1\pmod 9$,
\[
\nu_3(S)=\left|\{0\}\cup\{-d\bmod 9:\ d\in S\}\right|.
\]
\end{proposition}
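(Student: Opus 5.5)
The plan is to unwind the definition of $B_3(S)$ directly, working modulo $9$ and using $10\equiv 1\pmod 9$. By definition, $\nu_3(S)$ counts the residues $n\bmod 9$ for which either $9\mid n$ or $9\mid(10n+d)$ for some $d\in S$. Thus $B_3(S)$ is a union of the solution set of $9\mid n$ and the solution sets of $9\mid(10n+d)$ over $d\in S$. I will identify each of these sets as a single residue class and then count the union.

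The first condition is immediate: $9\mid n$ holds exactly for the single class $n\equiv 0\pmod 9$.

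For the second condition, fix $d\in S$. Since $10\equiv 1\pmod 9$, we have $10n+d\equiv n+d\pmod 9$. So $9\mid(10n+d)$ holds if and only if $n\equiv -d\pmod 9$. In particular $10$ is invertible modulo $9$ with inverse $1$, and this condition again picks out exactly one residue class, namely $-d\bmod 9$.

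Taking the union over the condition $n\equiv 0$ and over all $d\in S$ gives
\[
B_3(S)=\{0\}\cup\{-d\bmod 9:\ d\in S\}.
\]
Taking cardinalities yields the stated formula.

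One point needs care: the union must be counted as a set, not as a sum, because the listed classes can coincide. This happens because $9\le 10$, so distinct digits can land in the same class; for instance $d=0$ and $d=9$ both give the class $0$, which also coincides with the class from $9\mid n$. This is exactly why the formula is written as $\left|\{0\}\cup\{-d\bmod 9: d\in S\}\right|$ rather than as $1+|S|$.

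There is no real obstacle in this argument. The only substantive step is the reduction $10n+d\equiv n+d \pmod 9$, which turns each divisibility condition into a single explicit residue class.
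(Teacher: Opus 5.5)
Your proof is correct and matches the paper's argument: the reduction $10n+d\equiv n+d\pmod 9$ turns each condition $9\mid(10n+d)$ into the single class $n\equiv -d\pmod 9$, and taking the union with the class $n\equiv 0$ from $9\mid n$ gives the formula. Your added remark on collisions (e.g.\ $d=0$ and $d=9$ both giving the class $0$) correctly explains why the count is the size of a union rather than $1+|S|$.
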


\begin{proof}
We have $9\mid(10n+d)$ if and only if $9\mid(n+d)$, i.e.\ $n\equiv -d\pmod 9$.
Taking the union over $d\in S$ and adding $n\equiv 0$
(from $9\mid n$) gives the formula.
\end{proof}

\begin{proposition}[The prime $p=2$]\label{prop:nup2}
The following are true.
\begin{enumerate}
\item if $d$ is odd, then $4\nmid (10n+d)$ for all $n$ (no solutions);
\item if $d\equiv 0\pmod 4$ (i.e.\ $d\in\{0,4,8\}$), then $4\mid(10n+d)$ holds for $n\equiv 0,2\pmod 4$;
\item if $d\equiv 2\pmod 4$ (i.e.\ $d\in\{2,6\}$), then $4\mid(10n+d)$ holds for $n\equiv 1,3\pmod 4$.
\end{enumerate}
Therefore, $\nu_2(S)$ is the size of the union of these residue sets together with $\{0\}$ (from $4\mid n$).
\end{proposition}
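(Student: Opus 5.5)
The plan is a direct residue computation modulo $4$. Write $10n+d$ and reduce it modulo $4$ using $10\equiv 2\pmod 4$, so that $10n+d\equiv 2n+d\pmod 4$. The divisibility $4\mid(10n+d)$ is therefore equivalent to $2n+d\equiv 0\pmod 4$, and this depends only on $n\bmod 2$ (hence only on $n\bmod 4$). This first reduction is what makes $\nu_2(S)$ a count of residue classes modulo $4$, consistent with the definition of $B_2(S)$.

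I then split into cases according to $d\bmod 4$.

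\emph{$d$ odd.} Since $2n$ is even, $2n+d$ is odd and can never be divisible by $4$. This gives item (1).

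\emph{$d\equiv 0\pmod 4$.} The condition becomes $2n\equiv 0\pmod 4$, that is, $n$ even. Modulo $4$ this means $n\equiv 0,2$, which gives item (2).

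\emph{$d\equiv 2\pmod 4$.} The condition becomes $2n\equiv 2\pmod 4$, that is, $n$ odd. This gives $n\equiv 1,3$ and item (3).

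Restricting to $d\in\Ddigits$ identifies the relevant digits as $\{0,4,8\}$ and $\{2,6\}$ respectively.

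For the final assertion, recall that $B_2(S)$ is by definition the set of $n\bmod 4$ with $4\mid n$ or $4\mid(10n+d)$ for some $d\in S$. The condition $4\mid n$ contributes exactly the class $\{0\}$. Each $d\in S$ contributes the residue set just determined, so $\nu_2(S)=|B_2(S)|$ is the size of the union of these sets with $\{0\}$.

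It may be worth recording the explicit consequence. Let $E$ denote $[S\cap\{0,4,8\}\neq\emptyset]$ and $O$ denote $[S\cap\{2,6\}\neq\emptyset]$. Then $B_2(S)$ is $\{0\}$, $\{0,2\}$, $\{0,1,3\}$ or all of $\mathbb{Z}/4$ according to the four combinations of $E$ and $O$. Hence $\nu_2(S)=1+E+2O$.

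There is no real obstacle here. The only point needing care is to note that the condition depends on $n$ modulo $2$, so each even-$d$ case yields two classes modulo $4$ rather than one. This is exactly the failure of invertibility of $10$ modulo $4$ that distinguishes $p=2$ from Proposition~\ref{prop:nup-large}.
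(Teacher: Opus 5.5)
Your proposal is correct and follows the same route as the paper: you reduce $4\mid(10n+d)$ to $2n+d\equiv 0\pmod 4$ and split on the parity of $d$, with even $d$ giving a condition on $n \bmod 2$ that lifts to two classes modulo $4$. Your closed form $\nu_2(S)=1+E+2O$ is a correct and convenient addition that the paper leaves implicit.
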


\begin{proof}
Solve $2n+d\equiv 0\pmod 4$ case-by-case. If $d$ is odd, the left-hand side is odd so has no solutions.
If $d$ is even, divide by $2$ to obtain a condition modulo $2$, which lifts to two classes modulo $4$.
\end{proof}

\begin{proposition}[The prime $p=5$]\label{prop:nup5}
The following are true:
\begin{enumerate}
\item $25\mid (10n)$ if and only if $5\mid n$, i.e.\ $n\equiv 0\pmod 5$ (five classes modulo $25$);
\item $25\mid (10n+5)$ if and only if $n\equiv 2\pmod 5$ (five classes modulo $25$).
\end{enumerate}
Therefore, $\nu_5(S)$ is the size of the union of these residue sets together with $\{0\}$ (from $25\mid n$).
\end{proposition}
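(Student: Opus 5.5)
The plan is to solve the two divisibility conditions modulo $25$ directly and then assemble $\nu_5(S)$ from the definition of $B_5(S)$. Fix an integer $n$ and a digit $d\in\Ddigits$.

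\emph{Which digits can contribute.} Since $5\mid 10n$, we have $10n+d\equiv d\pmod 5$. So $25\mid(10n+d)$ is possible only when $5\mid d$, that is, $d\in\{0,5\}$. Every other digit contributes no residue classes modulo $25$, which is why the statement lists only these two cases.

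\emph{Part (1).} Write $10n=25\cdot\frac{2n}{5}$ when $5\mid n$. Conversely, $25\mid 10n=2\cdot 5\cdot n$ forces $5\mid 2n$, and hence $5\mid n$ because $\gcd(2,5)=1$. Thus $25\mid 10n$ exactly when $n\equiv 0\pmod 5$. This condition depends only on $n\bmod 25$, and it picks out the five classes $0,5,10,15,20$.

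\emph{Part (2).} Factor $10n+5=5(2n+1)$. Then $25\mid(10n+5)$ if and only if $5\mid 2n+1$. Since $2\cdot 3\equiv 1\pmod 5$, multiplying $2n\equiv -1$ by $3$ gives $n\equiv -3\equiv 2\pmod 5$. Modulo $25$ this is again five classes, namely $2,7,12,17,22$.

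\emph{Assembling $\nu_5(S)$.} By definition, $B_5(S)$ is the union of the class $\{0\}$ (from $25\mid n$) with the residue sets coming from each $d\in S$. The digits $d\notin\{0,5\}$ contribute nothing. So $\nu_5(S)$ is the size of the union of $\{0\}$ with whichever of the two sets above occur for $d\in S$.

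As a consistency check, the class $0$ already lies in the set from part (1), and the sets in (1) and (2) are disjoint. Hence $\nu_5(S)$ takes the values $1,5,6,10$ according to whether $S\cap\{0,5\}$ is $\emptyset$, $\{0\}$, $\{5\}$, or $\{0,5\}$.

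There is no real obstacle here. The only point requiring care is the factorization $10n+5=5(2n+1)$, which reduces a condition modulo $25$ to one modulo $5$ that lifts to five classes.
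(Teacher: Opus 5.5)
Your proposal is correct and follows essentially the same route as the paper: you rule out digits with $5\nmid d$, reduce $25\mid 10n$ to $5\mid n$, and factor $10n+5=5(2n+1)$ to get $n\equiv 2\pmod 5$, with each class modulo $5$ lifting to five classes modulo $25$. Your closing check that $\nu_5(S)\in\{1,5,6,10\}$, according to $S\cap\{0,5\}$, is correct and is a useful explicit addition that the paper leaves implicit.
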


\begin{proof}
If $5\nmid d$, then $10n+d$ is not divisible by $5$, hence cannot be divisible by $25$.
If $d=0$, then $25\mid 10n$ is equivalent to $5\mid n$.
If $d=5$, then $25\mid (10n+5)=5(2n+1)$ is equivalent to $5\mid (2n+1)$, i.e.\ $n\equiv 2\pmod 5$.
Each congruence class modulo $5$ lifts to exactly five classes modulo $25$.
\end{proof}

\subsection{Computing $c_{\mathrm{dead}}$ numerically}

Using the explicit formulas above, one can compute $c_{\mathrm{dead}}$ to high precision.
A convenient simplification is the observation that for every prime $p\ge 7$
the local factor depends on $S$ only through $\abs{S}$ and whether $0\in S$.
This reduces the $1024$ Euler products to a linear combination of only $22$ products.

The decimal shown in Theorem \ref{thm:main} was obtained by evaluating these products using
the rapidly convergent identity, valid for $p\ge 7$ and $1\le a\le 9$,
\[
\sum_{p\ge 7}\log\!\left(1-\frac{a}{p^2}\right)
=-\sum_{k\ge 1}\frac{a^k}{k}\sum_{p\ge 7}\frac{1}{p^{2k}},
\]
and truncating the $k$-sum at a point where the tail is far below the displayed precision.
(The ratio $a/p^2\le 9/49$ ensures very fast geometric decay.)

\medskip


\begin{thebibliography}{9}

\bibitem{Apostol}
T.~Apostol,  \emph{Introduction to Analytic Number Theory}, New York, Springer, 1976.

\bibitem{Kominers} S.~D.~Kominers, \emph{Uniform bounds for digit-appending Fibonacci walks},
(https://arxiv.org/abs/2512.06446)

\bibitem{Lean}
L.~de~Moura, S.~Kong, J.~Avigad, F.~van~Doorn, and J.~von~Raumer,
The {L}ean theorem prover (system description),
in \emph{Automated Deduction -- CADE-25},
Lecture Notes in Computer Science~9195, Springer, 2015, 378--388.

\bibitem{Mathlib2020}
The mathlib Community,
The {L}ean mathematical library,
in \emph{Proceedings of the 9th ACM SIGPLAN International Conference on
Certified Programs and Proofs (CPP 2020)},
ACM, 2020.

\bibitem{MillerEtAl}
S.~Miller, Y.~Peng, I.~Popescu, K.~\c{S}iktar, and S.~Wattanawanichkul,
\emph{Walking to infinity along number theory sequences}, Integers \textbf{24} (2024).

\bibitem{MIRSKY}
L.~Mirsky, \emph{Note on an asympotic formula connected with $r$-free integers}, The Quarterly Journal of Mathematics, Oxford, \textbf{18} (1947), 178-182.

\end{thebibliography}
\end{document}